\newcommand{\labbel}{\label}
\newtheorem*{theorem*}{Theorem}
\newtheorem*{proposition*}{Proposition}
\newtheorem*{corollary*}{Corollary}
\newtheorem*{lemma*}{Lemma}
\theoremstyle{definition}
\newtheorem*{definition*}{Definition}
\theoremstyle{remark}
\newcommand{\brfrt}{\hspace{0 pt}}
\def\v{\mathcal V}  
\newcommand{\alg}{\mathbf}
\begin{document}
 
\title{Nest-representable tolerances}

\author{Paolo Lipparini} 
\address{Nido di Matematica\\Viale della  Ricerca
 Scientifica\\Universit\`a di Roma ``Tor Vergata'' 
\\I-00133 ROME ITALY}
\urladdr{http://www.mat.uniroma2.it/\textasciitilde lipparin}

\keywords{Congruence identity, tolerance identity, representable tolerance,
nest-representable tolerance}

\subjclass[2010]{08A30, 08B05}
\thanks{Work performed under the auspices of G.N.S.A.G.A. Work 
partially supported by PRIN 2012 ``Logica, Modelli e Insiemi''}

\begin{abstract}
We introduce the notion of
a nest-representable tolerance and show that some results from \cite{contol}
and \cite{malgra} 
can be extended to this more general setting.
\end{abstract} 

\maketitle

\begin{definition*} \labbel{def}  
Recall from \cite{contol} that a tolerance
$\Theta$ of some algebra ${\alg A} $  is {\em representable} 
if and only if there exists a compatible and reflexive
relation $R$ on ${\alg A} $ such that 
$\Theta= R \circ R ^\smallsmile $ (where
 $R ^\smallsmile $ denotes the converse of $R$).
A tolerance
$\Theta$ of some algebra ${\alg A} $  is {\em weakly representable} 
if and only if there exists a set $K$ (possibly infinite) and there are
compatible and reflexive
relations $R_k$ ($k \in K$) on ${\alg A} $ such that 
$\Theta= \bigcap _{k \in K} (R_k \circ R_k ^\smallsmile )$. 

We define the set of \emph{nest-representable}
tolerances of ${\alg A} $ as the smallest set of tolerances of ${\alg A} $ 
which is closed under the following formation rules.
  \begin{enumerate}  
  \item  
Every representable tolerance is nest-representable.
\item
The intersection of any family of nest-representable
tolerances is nest-representable.
\item 
If $\Psi$ is a  nest-representable
tolerance and $R$ is a compatible and reflexive
relation, then  $R \circ \Psi \circ R ^\smallsmile  $
 is a nest-representable
tolerance. 
 \end{enumerate}
\end{definition*}

Notice that, in particular, every (weakly)
representable tolerance is nest-representable.
We shall show that many results from \cite{contol, malgra}
hold also for nest-representable tolerances, not only 
for   (weakly)
representable tolerances.

Recall that if  $p$ and $q$ are terms of the same arity for the 
language
$ \{\circ, \cap\} $, then a
strong Maltsev  condition $M(p \subseteq q)$ 
can be associated to the inclusion 
$p \subseteq q$, where the arguments of 
$p$ and  $q$ are intended to vary among congruences
of some algebra.  
See, e.~g., 
 Cz\'edli, Horv{\'a}th,  Lipparini \cite{CHL}, 
   Freese, McKenzie
  \cite[Chapter XIII]{FMK},
Hutchinson, Cz\'edli  \cite{HC},
J{\'o}nsson \cite{J},  Pixley 
\cite{P}, 
Wille \cite{W}. 
In this regard, we will follow the notations from Definition 6 
in \cite{contol}.
We can also consider $ \{\circ, \cap, +\} $-terms,
where $+$ is always interpreted as
$\Theta + \Psi = \bigcup_{n < \omega } \Theta \circ_n \Psi $.
Thus   $ \alpha + \beta $ is always interpreted as the join 
in the lattice of congruences, but $\Theta + \Psi $
turns out to be generally much larger than
the join of $\Theta$ and $ \Psi$ in \emph{the lattice of tolerances}.
If $p$ is a  $ \{\circ, \cap, +\} $-term,
we let $p_n$ denote the term obtained from 
$p$ by substituting $+$ with $\circ_{n}$.
By using $p_n$, we can express (not necessarily strong)
 Maltsev conditions. See \cite{CHL} for details; 
see the proof of \cite[Theorem 4]{contol}, as far as the notions
in the present note are concerned.     
  
Graphs provide a
 neat way to look at the Maltsev  condition
 associated to an inclusion. To each
$ \{\circ, \cap\} $-term it can be naturally associated a labeled 
graph. See Cz\'edli  \cite{czjsd, C2, czmsd, C4,C5} and Cz\'edli, Day
 \cite{CD}. In \cite{malgra} we observed that 
to every pair of edge-labeled graphs ${\mathbf G}$
and 
${\mathbf H}$
one can associate 
a condition
$M({\mathbf G},{\mathbf H})$,
in such a way that when
${\mathbf G}$
and 
${\mathbf H}$ are the graphs associated to the terms 
$p$ and $q$, then 
$M({\mathbf G},{\mathbf H})$
turns out to be equal to
 $M(p \subseteq q)$. 
In  \cite{contol} we introduced the notion of  a 
 \emph{regular}   $ \{\circ, \cap\} $-term.
Roughly, $p$ is regular if during the construction of $p$   
we never encounter two adjacent symbols. 
Correspondingly, an 
  edge-labeled graph ${\mathbf G}$ is \emph{regular}
if it is finite and, for each label,  all the equivalence classes of vertices 
which can be connected through edges with that label
have cardinality $\leq 2$. 
If  ${\mathbf G}$ is an edge-labeled graph 
with labels $\alpha_1, \dots, \alpha _n$ and 
with distinguished vertices $d_1, \dots, d _h$ and 
$R_1, \dots, R_n$
are symmetric and reflexive relations on some set $A$,
we let ${\mathbf G} (R_1, \dots, R_n)$ denote
the $h$-ary relation on $A$ consisting of those 
$h$-uples   $a_1, \dots, a _h$ of $A$ such that 
${\mathbf G}$ can be represented in $A$
in such a way that the distinguished vertices 
correspond to $a_1, \dots, a _h$,
and, for every label $i$, those  edges labeled by $\alpha_i$ are represented by 
$R_i$-related elements. See \cite[Definition 2]{malgra}.   
We refer to  \cite{contol}
and \cite{malgra} for more details and for
further unexplained notions and notations.

A special case of item (1) in the following theorem shall be presented 
in a planned expanded version of \cite{ntcm}.
Probably the following proof can be more easily understood 
through that example. 

\begin{theorem*}\label{contolnest} 
(1) Suppose that $\v$ is a variety and that 
 $p$ and $q$ are terms of the same arity.
Suppose that either (i) $p$ and  $q$ are
$ \{\circ, \cap\} $-terms and $p$ is regular,
or (ii) $p$ and  $q$ are
$ \{\circ, \cap, +\} $-terms and
either $p_3$ or   $p_4$ is regular. 
 Then 
the following conditions are equivalent.
  \begin{enumerate}[(a)]
\item 
$\v $ satisfies the congruence identity
$p(\alpha_1, \dots, \alpha_n) \subseteq q(\alpha_1, \dots, \alpha_n )$.
\item
 $\v$ satisfies the (strong in case (i)) Maltsev  condition $M(p \subseteq q)$.
\item
 The tolerance identity
$p(\Theta_1, \dots, \Theta_n) \subseteq q(\Theta_1, \dots, \Theta_n)$
holds for every algebra $ {\alg A} $ in 
$\v$ and
for all nest-representable tolerances
$\Theta_1, \dots, \Theta_n $ of $ {\alg A} $.
   \end{enumerate} 

(2) Suppose that $\v$ 
is a variety and  ${\mathbf G}$, ${\mathbf H}$ are labeled graphs
with the same labels and with the same number of distinguished vertices.
If ${\mathbf G}$ 
is regular, then the following are equivalent.
  \begin{enumerate}[(a)]
\item
 $\v$ satisfies ${\mathbf G} (\alpha _1, \dots, \alpha _n)\subseteq {\mathbf H}(\alpha _1, \dots, \alpha _n)$ for congruences.
\item
$\v$ satisfies the condition $M({\mathbf G},{\mathbf H})$.
\item
 $\v$ satisfies ${\mathbf G} (\Theta _1, \dots, \Theta _n)\subseteq {\mathbf H}(\Theta _1, \dots, \Theta _n)$ for nest-representable tolerances.
 \end{enumerate} 
\end{theorem*}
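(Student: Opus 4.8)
The plan is to prove part~(1) via the cycle (a)$\Rightarrow$(c)$\Rightarrow$(a), together with the standard equivalence (a)$\Leftrightarrow$(b). For (a)$\Leftrightarrow$(b) nothing new is needed: this is the classical correspondence between a congruence identity and its associated (strong, in case~(i)) Maltsev condition, and in case~(ii) the non-strong analogue expressed through the terms $p_m$, as in the proof of \cite[Theorem~4]{contol}. The implication (c)$\Rightarrow$(a) is immediate, since every congruence $\alpha$ satisfies $\alpha=\alpha\circ\alpha^\smallsmile$, hence is representable, hence nest-representable; so (c) restricted to congruences is exactly~(a). The whole content is therefore (a)$\Rightarrow$(c).

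The key structural fact is that \emph{every nest-representable tolerance $\Theta$ of an algebra $\alg A\in\v$ has the form $\Theta=f(\gamma):=\{(f(x),f(y)):(x,y)\in\gamma\}$ for some surjective homomorphism $f\colon\alg B\to\alg A$ with $\alg B\in\v$ and some congruence $\gamma$ of $\alg B$}; I would prove this by induction along the three formation rules. If $\Theta=R\circ R^\smallsmile$, take $\alg B=R\le\alg A^{2}$, with $f$ the first projection (surjective since $R$ is reflexive) and $\gamma$ the kernel of the second projection; then $f(\gamma)=R\circ R^\smallsmile$. If $\Theta=\bigcap_{k\in K}\Theta_k$ with $\Theta_k=f_k(\gamma_k)$, $f_k\colon\alg B_k\to\alg A$, take $\alg B$ to be the subalgebra of $\prod_{k\in K}\alg B_k$ consisting of the tuples on which all the $f_k$ agree, with $f$ the induced map and $\gamma=\bigl(\prod_{k}\gamma_k\bigr)\cap\alg B^{2}$; then $f(\gamma)=\bigcap_k\Theta_k$. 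If $\Theta=R\circ\Psi\circ R^\smallsmile$ with $\Psi=g(\gamma)$, $g\colon\alg B\to\alg A$, take $\alg C=\{(a,x)\in A\times B:(a,g(x))\in R\}$, a subalgebra of $\alg A\times\alg B$ by compatibility of $R$, with $f$ the first projection (surjective since $R$ is reflexive) and $\delta$ the kernel of $(a,x)\mapsto x/\gamma$; then $f(\delta)=R\circ\Psi\circ R^\smallsmile$. All the algebras produced lie in $\v$, since $\v$ is a variety.

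Now assume~(a), let $\Theta_1,\dots,\Theta_n$ be nest-representable tolerances of $\alg A\in\v$, and write $\Theta_i=f_i(\gamma_i)$ as above. Put $\alg C=\{(a;x_1,\dots,x_n)\in A\times B_1\times\dots\times B_n:f_i(x_i)=a\text{ for all }i\}$, a subalgebra of $\alg A\times\alg B_1\times\dots\times\alg B_n$, so $\alg C\in\v$; let $f\colon\alg C\to\alg A$ be the first projection (surjective, as each $f_i$ is) and let $\delta_i$ be the kernel of $(a;\vec x)\mapsto x_i/\gamma_i$, a congruence of $\alg C$ with $f(\delta_i)=\Theta_i$. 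Let $(a,b)\in p(\Theta_1,\dots,\Theta_n)$. In case~(i) this is witnessed, by \cite{malgra}, by a representation $\phi$ of the (finite) graph ${\mathbf G}_p$ associated with $p$ in $\alg A$: a map from the vertices to $A$ sending the two distinguished vertices to $a,b$ and each $\alpha_i$-edge to a $\Theta_i$-related pair. Because $p$ is regular, ${\mathbf G}_p$ is regular, so each vertex is incident with at most one $\alpha_i$-edge for every~$i$ (parallel edges inside a two-element class and loops being harmless). Choose, for each $\alpha_i$-edge $e=vw$, a lift $(x_e,y_e)\in\gamma_i$ with $f_i(x_e)=\phi(v)$, $f_i(y_e)=\phi(w)$, and define a map $\Psi$ from the vertices of ${\mathbf G}_p$ into $\alg C$ by letting the $\alg A$-coordinate of $\Psi(v)$ be $\phi(v)$ and its $\alg B_i$-coordinate be the $v$-endpoint of the lift of the unique $\alpha_i$-edge at $v$ when there is one, and some $f_i$-preimage of $\phi(v)$ otherwise. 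Regularity is exactly what makes this definition unambiguous; one then checks $\Psi(v)\in\alg C$, that $f\circ\Psi=\phi$, and that $(\Psi(v),\Psi(w))\in\delta_i$ for every $\alpha_i$-edge $e=vw$, the relevant $\alg B_i$-coordinates being $x_e$ and $y_e$. Thus $\Psi$ is a representation of ${\mathbf G}_p$ in $\alg C$, so $(\Psi(d_1),\Psi(d_2))\in p(\delta_1,\dots,\delta_n)$; by the congruence identity~(a), valid in $\alg C\in\v$, also $(\Psi(d_1),\Psi(d_2))\in q(\delta_1,\dots,\delta_n)$, and composing a representation witnessing this with $f$ yields a representation of the graph of $q$ in $\alg A$ with $\Theta_i$-edges, whence $(a,b)\in q(\Theta_1,\dots,\Theta_n)$. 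Case~(ii) is obtained by running this argument on a suitable $p_m$ in place of $p$ (the sets $p_m(\vec\Theta)$ increase with $m$, and $p_3$ or $p_4$ — hence $p_m$ for large $m$ of the appropriate parity — is regular, as in \cite[Theorem~4]{contol}); part~(2) is literally the same with the given regular graph ${\mathbf G}$ in place of ${\mathbf G}_p$ and $M({\mathbf G},{\mathbf H})$ in place of $M(p\subseteq q)$.

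The main obstacle I expect is the structural fact above, and within it rule~(3): one has to find the right auxiliary algebra, see that it lies in $\v$, and verify that $f(\delta)$ equals $R\circ\Psi\circ R^\smallsmile$ and not something larger; the infinite-intersection case of rule~(2) also needs a moment's care. After that, the one genuine use of the hypothesis on $p$ is in making the definition of $\Psi$ unambiguous at a vertex lying on several $\alpha_i$-edges of different labels: two $\alpha_i$-edges meeting a common vertex $v$ could only be lifted through two (generally distinct) $f_i$-preimages of $\phi(v)$, and then no choice of $\alg B_i$-coordinate for $\Psi(v)$ keeps all the congruence-edge conditions in $\alg C$ true. This is precisely where regularity — the absence of two adjacent occurrences of a variable in $p$ — is indispensable; everything else is routine verification.
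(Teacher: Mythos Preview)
Your argument is correct and follows a genuinely different route from the paper's. The paper proves (b)$\Rightarrow$(c): fixing the Maltsev terms $t_w$ furnished by $M(p\subseteq q)$, it shows by induction on the complexity of the nest-representation of a tolerance $\Psi$ that whenever the inputs $e_1,\dots,e_m$ are $\Psi$-related along each $\sim_i$-pair, so are $t_w(\vec e)$ and $t_{w'}(\vec e)$; the step for rule~(3) chooses elementwise witnesses $e_j\mathrel{R}b_{ijh}\mathrel{\Phi}b'_{ijh}\mathrel{R^\smallsmile}e_h$ and appeals to compatibility of $R$. You instead prove (a)$\Rightarrow$(c) via a structural lemma --- every nest-representable tolerance on $\alg A\in\v$ is $f(\gamma)$ for some surjective $f\colon\alg B\to\alg A$ with $\alg B\in\v$ and $\gamma\in\operatorname{Con}\alg B$ --- and then a single fibred product $\alg C$ over $\alg A$ turns all the $\Theta_i$ into images of congruences $\delta_i$; regularity of $p$ lets you lift a $\mathbf G_p$-representation to $\alg C$ coordinatewise, apply~(a) there, and push the resulting $\mathbf G_q$-representation back down through $f$. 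Your approach is more conceptual and yields a lemma of independent interest (and in fact establishes~(c) for the whole class of tolerances arising as congruence images from within $\v$, of which the nest-representable ones are shown to be a subclass); the paper's approach avoids building auxiliary algebras and stays entirely inside the Maltsev-term framework of \cite{contol}. Both use regularity at exactly the same place: the paper needs the $\sim_i$-classes to have size at most $2$ so that the replacements $e_j^*$ are unambiguous, while you need it so that the $\alg B_i$-coordinate of the lifted vertex $\Psi(v)$ is well defined.
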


\begin{proof}
We shall prove (1)(i); the case (1)(ii) 
then follows by arguments similar to 
\cite[proof of Theorem 4]{contol}.
The proof of (2)
is entirely similar. 
The implication (a) $\Rightarrow $  (b) in (1) is classical, and
(c) $\Rightarrow $ (a) is  trivial.  

In order to prove 
(b) $\Rightarrow $  (c), we shall assume
 the notations 
from the proof of 
\cite[Theorem 3 (ii) $\Rightarrow $  (iii)]{contol}.
We have to show  that
$d_w =
t_w(c_1, \dots, c_m) \mathrel {\Theta_i}  
t _{w'}(c_1, \dots, c_m) =
d_{w'}$,
whenever the vertices
$w, w' \in W$ of ${\mathbf G}_q$ are connected by an edge labeled by
$ \alpha_i$,
using  the same 
assumptions of 
\cite[Theorem 3]{contol},
 except  that $\Theta_i$ is 
only assumed to be  nest-representable.   
Fix some $i$ and say that two indices $j,h \leq m$ 
are \emph{paired} if  $ \{ v_j, v_h \} $ is a $\sim_i$-equivalence class.
In particular,  we have that if 
$j $ and  $h$ are paired, 
then $ c_j \mathrel \Theta_i  c_h $.

Suppose that $\Psi$ is a nest-representable
tolerance of ${\alg A} $.
We are going to prove, by induction on the 
complexity of the nest-\brfrt representation of $\Psi$,
that if  $e_1, \dots, e_m$ 
are elements such that 
 $ e_j \mathrel \Psi e_h $,
whenever 
$j $, $h$ are paired,
then 
$t_w(e_1, \dots, e_m) \mathrel {\Psi}  
t _{w'}(e_1, \dots, e_m)$. 
The special case $\Psi = \Theta_i$, 
$c_1 = e_1$, \dots,  $c_m = e_m$
will then give the desired result.

The ``basis'' case given by (1) in the above definition,
that is, the case when $\Psi$ is representable
is given by the proof of  \cite[Theorem 3]{contol}.  
The proof is similar to the argument in  case (3) below, 
considering $\Phi=0$ there, that is, being $\Phi$-related means to be equal. 
Then apply identity (m$ _{w, w',i} $).

If the nest-representability of $\Psi$
is given by case (2) above, that is, 
$\Psi= \bigcap _{k \in K_i} \Phi _{k} $
and, by the inductive hypothesis, we have 
$t_w(e_1, \dots, e_m) \mathrel { \Phi _k}  
t _{w'}(e_1, \allowbreak \dots, e_m)$, for every $k \in  K_i$,
 then obviously
$t_w(e_1, \dots, e_m) \mathrel {\Psi}  
t _{w'}(e_1, \dots, e_m)$. Thus the induction step
is complete in this case. Cf. also the proof of
\cite[Theorem 3 (ii) $\Rightarrow $  (iii)$'$]{contol}.

Finally, 
if the nest-representability of $\Psi$
is given by case (3), then
  $\Psi = R \circ \Phi \circ R ^\smallsmile  $, for
some
 compatible and reflexive $R$ and 
some nest-representable $\Phi$
for which the induction has already been carried over.
If 
$j  \neq h$ are paired,
then  $ e_j \mathrel \Psi e_h $ and, 
by the above assumption on  $ \Psi$,
there are elements 
$b _{ijh} $ and $b' _{ijh} $ such that
$ e_j  \mathrel { R} b _{ijh} \mathrel  \Phi b' _{ijh}
\mathrel {  R ^\smallsmile} e_h$, thus
$e_h \mathrel {  R } b' _{ijh}$. 
Define elements $e^*_1, \dots, e^*_m$ as follows.
If  $ \{ v_j \} $ is a $\sim_i$-equivalence class,
let $e^*_j = e_j$.
If   $j< h$ are paired, let  
$e^*_j = b _{ijh}$ and
$e^*_h = b' _{ijh}$, thus $e^*_j \mathrel { \Phi} e^*_h $.
The above cases do not overlap, since 
$\sim_i$ is an equivalence relation; moreover, they 
cover all indices, by the assumption that $p$
is a regular term; see \cite{contol}. 
By the inductive hypothesis, 
$t_w(e^*_1, \dots, e^*_m) \mathrel {\Phi}  
t _{w'}(e^*_1, \dots, e^*_m)$.
Since $e_j \mathrel { R } e^*_j$, for every $j$, then  
$t_w(e_1, \dots, e_m) \mathrel {R}  
t_w(e^*_1, \dots, e^*_m) \mathrel {\Phi}  
t _{w'}(e^*_1, \dots, e^*_m) \mathrel { R ^\smallsmile } 
t _{w'}(e_1, \dots, e_m)$, hence
$t_w(e_1, \dots, e_m) \mathrel {R \circ \Phi \circ  R ^\smallsmile } 
t _{w'}(e_1, \dots, e_m)$, 
that is, 
$t_w(e_1, \dots, e_m) \mathrel { \Psi } 
t _{w'}(e_1, \dots, e_m)$, what we had to show.
 \end{proof}

Notice that (1)(i) can be seen as a particular case of
(2), by the  mentioned way of associating a graph to a term.

This is a preliminary version, it might contain inaccuraccies (to be
precise, it is more likely to contain inaccuracies than  subsequent versions).

\end{document}